\def\dispace{\setlength{\itemsep}{2pt}}
\def\pSkip{\vskip 1mm \noindent}
\def\add{\vee}
\def\mlt{+}
\def\Id{\operatorname{Id}}
\def\Mat{\tM}
\def\MnT{\Mat_n(\Trop)}
\def\Mn{\Mat_n}
\def\STn{\Mn(\STrop)}
\def\SD{\Dl_{\operatorname{div}}}
\newcommand{\tpF}{f}
\def\ef{f^{\operatorname{es}}}
\def\tlef{\widetilde{f^{\operatorname{es}}}}
\def\eg{g^{\operatorname{es}}}
\def\bfi{ \textbf{i}}
\def\bfj{ \textbf{j}}
\def\bfk{ \textbf{k}}
\def\bfa{ \textbf{a}}
\def\bfp{ \textbf{p}}
\def\bfq{ \textbf{q}}
\newcommand{\Net}{\mathbb N}
\newcommand{\hlt}[1]{\textbf{#1}}
\newcommand\vx[1]{#1} 
\def\fuij{f^u_{i,j}}
\def\fvij{f^v_{i,j}}
\def\fwij{f^w_{i,j}}
\def\mfA{\mathfrak{A}}
\def\mfB{\mathfrak{B}}
\def\mfAgm{\mfA^\gm}
\def\mfBgm{\mfB^\gm}
\def\N{\mathbb N}
\def\nxn{n\times n}
\def\w{k}
\def\N{\mathbb N}
\def\tS{\mathcal S}
\def\Om{\Omega}
\def\om{\omega}
\def\dl{\delta}
\def\ver{\mathcal V}
\def\arc{\mathcal E}
\def\varX{\mathcal A}
\def\wlen{\ell}
\def\tlf{\widetilde f}
\def\tlg{\widetilde g}
\newcommand\corref[2]{\pSkip\textbf{Corollary #1. }\emph{#2}\vskip 1mm}
\def\tM{\mathcal M}
\newcommand\eval[2]{\left\llbracket  #1 , #2 \, \right  \rrbracket}
\newcommand\evalb[2]{\big\llbracket  #1 , #2 \, \big  \rrbracket}
\newcommand\substit[2]{\left[ #1 , #2 \, \right]}
\newcommand\sid[2]{\langle #1 , #2 \rangle}
\newcommand\cset[1]{\langle #1 \rangle}
\def\set2{{\cset{2}}}
\def\l2{x^2y^2x}
\def\ll2{yx^2y^2x}
\newcommand{\len}[1]{\operatorname{\ell}(#1)}
\newcommand{\ds}[1]{\ {#1} \ }
\def\pth{\gm}
\def\conf{\mathfrak{C}}
\def\one{\mathbb{1}}
\def\zero{\mathbb{0}}
\newtheorem{theorem}{Theorem}[section]
\newtheorem{proposition}[theorem]{Proposition}
\newtheorem{definition}[theorem]{Definition}
\newtheorem{lemma}[theorem]{Lemma}
\newtheorem{notation}[theorem]{Notation}
\newtheorem{corollary}[theorem]{Corollary}
\newtheorem{remark}[theorem]{Remark}
\newcommand {\junk}[1]{}
\newcommand{\etype}[1]{\renewcommand{\labelenumi}{(#1{enumi})}}
\def\eroman{\etype{\roman} \dispace }
\def\ealph{\etype{\alph} \dispace}
\def\({\left(}
\def\){\right)}
\newcommand{\To}{\longrightarrow }
\def\al{\alpha}
\def\bt{\beta}
\def\gm{\gamma}
\def\Gm{\Gamma}
\def\lm{\lambda}
\def\Lm{\Lambda}
\def\Dl{\Delta}
\def\Sig{\Sigma}
\def\e{\varepsilon}
\def\w{\om}
\def\Real{\mathbb R}
\def\Trop{\mathbb T}
\def\STrop{\mathbb {ST}}
\def\Auto{{\digr}}
\def\htA{\widehat A}
\def\htB{\widehat B}
\title[Semigroup identities of supertropical matrices]
 {Semigroup identities of supertropical matrices}
\author{Zur Izhakian}
\address{  Institute  of Mathematics,
 University of Aberdeen, AB24 3UE,
Aberdeen,  UK.
    }
    \email{zzur@abdn.ac.uk}
\author{Glenn Merlet}
 \address{Aix Marseille Univ, CNRS, Centrale Marseille, I2M, Marseille, France}
 \email{glenn.merlet@univ-amu.fr}
\subjclass[2010]{Primary:  20M05, 20M07, 20M30, 47D03; Secondary: 16R10,  68Q70,
14T05. }
\date{\today}
\keywords{Tropical (max-plus) matrices, supertropical matrices, idempotent
semirings, semigroup identities,  semigroup
representations, weighted diagraphs.}
\thanks{\textbf{Acknowledgment}:
  The writing of this paper was completed under the auspices of the Resarch in Pairs program of the Mathematisches Forschungsinstitut Oberwolfach, Germany. The authors thank MFO for the excellent working environment.}
\def\Id{\operatorname{Id}}
\def\digr{{\operatorname G}}
\def\hdigr{{\widecheck{\digr}}}
\begin{document}

\begin{abstract}
We  prove that, for any $n$, the monoid of all $n \times n$ supertropical matrices extending tropical matrices satisfies nontrivial semigroup identities. These identities are carried over to walks on labeled-weighted digraphs with double arcs.

\end{abstract}

\maketitle



\section*{Introduction}
\numberwithin{equation}{section}

The tropical (max-plus) semiring  is the set $\Trop\ := \Real\cup\{-\infty\}$ equipped with the operations
of maximum and summation
$$a \add  b :=\max\{a,b\},\qquad a \mlt b := \operatorname{sum}\{a,b \},
$$
serving respectively as addition and multiplication.
It extends to the supertropical semiring $\STrop$, defined over the disjoint union $\STrop := \Real \cup \{ - \infty \} \cup  \Real^\nu$ of two copies of $\Real$, together with $\zero := - \infty$, see  \cite{Iz,nualg,IR0}. The members of ~$\Real^\nu$ are denoted by $a^\nu := \nu(a)$, for $a \in \Real$, where $\nu$ is the projection  $\nu: \STrop \twoheadrightarrow \Real^\nu \cup \{ \zero \}$.
$\STrop$~ has the total ordering $\zero < a < a^\nu < b < b^\nu < \cdots  $ for any $a < b $ in $\Real$, which  determines the   addition
$$x \add y : = \left\{\begin{array}{ll}
                    \max\{x,y\}, &  \text{ if } \nu(x) \neq \nu(y), \\[1mm]
                    \nu(x), &  \text{ if } \nu(x) = \nu(y).
                  \end{array}
 \right. $$
 The multiplication of $\STrop$ is given by
 $$
  \qquad x\cdot  y : = \left\{\begin{array}{ll}
                    \operatorname{sum}\{x,y \}, &  \text{ if } x,y \in \Real,  \\[1mm]
                    \operatorname{sum}\{\nu(x), \nu(y) \}, &  \text{ otherwise},
                  \end{array}
 \right.
 $$
where the identity element is $\one := 0$. In this extension $x \add x = \nu(x)$ for every $x \in \STrop$, and thus $\STrop$ is not additively idempotent   as  $\Trop$, i.e., $x \add  x = x$. With this arithmetic
 $\Trop^\nu := \Real^\nu \cup \{ \zero\}$ is a semiring ideal, isomorphic to $\Trop$.
 Elements $x , y \in \STrop $ are $\nu$-equivalent, written  $x \cong_\nu y$, iff $\nu(x) = \nu(y)$.

All $\nxn$ matrices over any  semiring $R$, and in particular over $\Trop$ and $\STrop$,  form a multiplicative monoid~$\Mn(R)$, whose    multiplication is induced from the operations of $R$ in the familiar way. In \cite[~Theorem 3.7]{IzhakianMerletIdentity} we proved that the matrix monoid $\Mn(\Trop)$ admits nontrivial semigroup identities, completing former partial results  \cite{trID,mxID,IzMr,Shitov,Okninski,l17} which have been also dealt in  \cite{Sapir,DJK,JT}.
 In this paper  we generalize this result, and prove that the same holds for $\Mn(\STrop)$ as well.
\corref{\ref{cor:ST.IdExistence}}{The monoid $\Mn(\STrop)$ satisfies a nontrivial semigroup identity, for every $n \in \N$.}

\noindent
An explicit inductive construction of semigroup identities admitted by $\Mn(\Trop)$ has  been  provided  in ~ \cite{IzhakianMerletIdentity}, relying on identities of nonsingular matrices \cite{trID}. Theorem \ref{thm:ST.IdExistence} proves that semigroup identities for $\Mn(\STrop)$ can be composed from semigroup identities of $\Mn(\Trop)$.

These results are a further step in the study of supertropical matrices \cite{Iz,IR0,IR1,IR2,IR3,IR4}, and have  immediate consequences in representations of semigroups and of topological-combinatorial objects.
 These matrices pave the way to a new type of  linear representations of semigroups, enhancing former representations \cite{plc}, and establish a tool to extract  semigroup identities.
\corref{\ref{cor:rep.super.id}}{Any semigroup that  is faithfully represented  in  $\Mn(\STrop)$  satisfies a nontrivial semigroup  identity.}

While tropical matrices correspond uniquely to weighted digraphs (see e.g.~\cite{Butkovic,MPatW}), which play a central role in many pure and applied subjects of study, supertropical matrices correspond to weighted digraphs with possible double arcs (e.g. quivers).  By this  correspondence,  we conclude that the  semigroup identities of  $\Mn(\STrop)$  are carried over to walks on labeled-weighted digraphs with double arcs (Corollary \ref{cor:grph.walks}).
Besides digraphs, supertropical matrices enable to frame additional more complicated combinatorial objects, such as  matroids, lattices, and simplicial complexes \cite{MatI,MatII}.

\section{Preliminaries}\label{sec:2}
 We begin with a brief relevant background on semigroup identities, tropical matrices, and tropical polynomials.

\subsection{Semigroup identities}

The free monoid $\varX^*$ of finite \hlt{words} is the monoid generated by a finite \hlt{alphabet} $\varX$ -- a finite set   of \hlt{letters} -- whose identity element is the empty word. The \hlt{length} $\wlen(w)$ of a word $w$ is the number of its letters.  Excluding the empty word from $\varX^*$, the free semigroup~ $\varX^+$ is obtained.

A (nontrivial) \hlt{semigroup identity} is a formal equality $u = v$, written as a pair $\sid{u}{v}$, (with
$u \neq v$)  in ~ $\varX^+$, cf. \cite{SV}. Allowing $u$ and $v$ to be the empty word as well, i.e.,  $u,v \in \varX^*$, a monoid identity is received.
An identity  $\sid{u}{v}$ is an \hlt{$n$-letter identity}, if $u$ and~$v$ together involve  at most~$n$ different letters from~$\varX$. The  \hlt{length} of  $\sid{u}{v}$ is defined to be  $\max\{ \wlen(u), \wlen(v)\}$.

 A semigroup $\tS := (\tS, \cdot \;)$ \hlt{satisfies a
semigroup identity}~$\sid{u}{v}$,   if
\begin{equation}\label{eq:s.id}
\text{ $ \phi(u)= \phi(v)$ \ for every semigroup homomorphism $\phi
:\varX^+ \longrightarrow \tS$.}
\end{equation}
 $\Id(\tS)$ denotes the set of all semigroup identities satisfied by $\tS$.

\begin{theorem}[{\cite[{Theorem 3.10}]{trID}}]\label{thm:2id} A semigroup that satisfies an
 $n$-letter identity, $n \geq 2$,  also satisfies  a
$2$-letter identity of the same length.
\end{theorem}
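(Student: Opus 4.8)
The plan is to produce the desired $2$-letter identity directly, by a substitution of single letters for single letters — an operation that leaves all word lengths, and hence the length of an identity, unchanged. Suppose the semigroup $S$ satisfies an $n$-letter identity $\sid{u}{v}$ with $u \neq v$, where $u$ and $v$ together involve only the letters $a_1, \dots, a_n \in \varX$, and set $\ell := \max\{\wlen(u), \wlen(v)\}$. For any map $\phi\colon \{a_1, \dots, a_n\} \to \{x,y\}$, extend it to a homomorphism $\phi\colon \{a_1,\dots,a_n\}^+ \to \{x,y\}^+$. Then $S$ satisfies $\sid{\phi(u)}{\phi(v)}$: precomposing an arbitrary homomorphism $\psi\colon \{x,y\}^+ \to S$ with $\phi$ yields a homomorphism under which $u$ and $v$ have the same image, so $\psi(\phi(u)) = \psi(\phi(v))$. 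Since $\wlen(\phi(u)) = \wlen(u)$ and $\wlen(\phi(v)) = \wlen(v)$, the pair $\sid{\phi(u)}{\phi(v)}$ is a $2$-letter identity of length $\ell$. Thus it suffices to exhibit one $\phi$ for which $\phi(u) \neq \phi(v)$ in $\{x,y\}^+$, so that the resulting identity is nontrivial.

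I would then split into two cases according to whether $u$ and $v$ have the same length. If $\wlen(u) \neq \wlen(v)$, take $\phi$ to be the constant map to $x$; then $\phi(u) = x^{\wlen(u)} \neq x^{\wlen(v)} = \phi(v)$, which already gives a (nontrivial) $1$-letter identity of length $\ell$, in particular a $2$-letter one.

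If $\wlen(u) = \wlen(v) = \ell$, then, since $u \neq v$ are words of equal length, they disagree in some position $p$: the $p$-th letter of $u$ is some $a_i$ and the $p$-th letter of $v$ is some $a_j$ with $a_i \neq a_j$. Define $\phi(a_i) := x$ and $\phi(a_k) := y$ for every $k \neq i$ (so in particular $\phi(a_j) = y$). Then $\phi(u)$ carries $x$ in position $p$ while $\phi(v)$ carries $y$ in position $p$, hence $\phi(u) \neq \phi(v)$, and $\sid{\phi(u)}{\phi(v)}$ is the required nontrivial $2$-letter identity of length $\ell$.

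The argument presents no genuine obstacle; the only point requiring care is the equal-length case, where one must choose the $2$-coloring so as to separate $u$ and $v$ at the selected position — but this is automatic, since we only need the two images to differ at a single spot and a difference at one position can never be cancelled by agreement elsewhere. The unequal-length possibility, which the position argument does not cover, is exactly what the preliminary reduction via the constant coloring disposes of.
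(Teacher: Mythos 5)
Your argument is correct. Note that the paper does not prove this statement itself -- it is quoted from \cite[Theorem 3.10]{trID} -- so there is no in-paper proof to compare against; your proof is the standard alphabet-collapsing argument (a letter-to-letter substitution is an endomorphism of the free semigroup, so it preserves satisfaction and word lengths, and the two-coloring chosen at a position where the equal-length words $u,v$ disagree, or the constant coloring when $\wlen(u)\neq\wlen(v)$, keeps the identity nontrivial), which is essentially how the cited reference establishes the result.
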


Therefore, in this view, regarding the existence of semigroup identities, \hlt{we assume that $\varX = \{ a,b \}$}.

\begin{notation}[{\cite[Notation 1.2]{IzhakianMerletIdentity}}] Given a word $w \in  \varX^+$ and elements $s,t \in \tS$,  we write
  $w\eval{s}{t}$ for the evaluation of $w$ in $\tS$, obtained by substituting
   $a \mapsto s$, $b \mapsto t$, where  $s,t\in\tS$. Similarly, we write
  $\sid{u}{v}\eval{s}{t}$ for the pair of evaluations  $u\eval{s}{t}$ and $v\eval{s}{t}$  in $\tS$ of the words $u$ and $v$.

 In the certain case that $\tS = \varX^+$, to indicate that for $u,v \in \varX^+$ the evaluation  $w\eval{u}{v}$ is again a word in~ $\varX^+$, we use the particular notation $w\substit{u}{v}$. Similarly, we write
  $\sid{u}{v}\substit{s}{t}$ for $\sid{u}{v}\eval{s}{t}$.

\end{notation}

With these notations, condition \eqref{eq:s.id} can be restated as
\begin{equation*}\label{eq:s.id.2}
\text{ $\sid{u}{v} \in \Id(\tS)$ \ iff \ $u\eval{s}{t} = v\eval{s}{t}$ for every $s,t \in \tS$.}
\end{equation*}
Clearly, $\sid{u}{v} \in \Id(\tS)$ implies $\sid{u}{v}\substit{w_1}{w_2} \in \Id(\tS)$ for any $w_1,w_2 \in \varX^+$. \pSkip

For the monoid $\tS = \MnT$ of all $\nxn$ tropical matrices we have  proved:
\begin{theorem}[{\cite[Theorem 3.7]{IzhakianMerletIdentity}}]
\label{thm:IdExistence} The monoid $\MnT$ satisfies a nontrivial semigroup identity for every $n \in \N$.
\end{theorem}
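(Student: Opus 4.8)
The plan is to reduce to two-letter identities and then, by induction on $n$, to assemble the identity of $\MnT$ — through an ideal filtration — from the identity of the \emph{nonsingular} tropical matrices (\cite{trID}) at the top stratum and the identity of $\Mat_{n-1}(\Trop)$ at the lower strata. By Theorem~\ref{thm:2id} it suffices to produce, for each $n$, a nontrivial two-letter identity $\sid{u_n}{v_n}\in\Id(\MnT)$, and I will keep every identity in play \emph{balanced}, i.e.\ $\wlen(u_n)=\wlen(v_n)$. This is harmless: $\MnT$ — and $\Mat_{n-1}(\Trop)$, and the set of nonsingular matrices — satisfies no identity of the shape $\sid{w^i}{w^j}$ with $i\ne j$, so the two sides of any nontrivial identity are not powers of a common word, whence $\sid{uv}{vu}$ is again nontrivial \emph{and} balanced. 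The base case $n=1$ is trivial, since $\Mat_1(\Trop)=\Trop$ is commutative and $\sid{ab}{ba}$ works.

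The engine of the induction is the elementary \textbf{gluing lemma}: if a monoid $S$ has a two-sided ideal $I$ such that the Rees quotient $S/I$ satisfies a balanced identity $\sid{u}{v}$ and $I$ satisfies a balanced identity $\sid{p}{q}$, then $S$ satisfies the balanced, nontrivial identity $\sid{p\substit{u}{v}}{q\substit{u}{v}}$. Indeed, for $A,B\in S$ set $U=u\eval{A}{B}$ and $V=v\eval{A}{B}$: if $U,V$ are not both in $I$ their nonzero images in $S/I$ coincide, so $U=V=:Z$ in $S$ and both sides evaluate to $Z^{\wlen(p)}=Z^{\wlen(q)}$; if $U,V\in I$, then $\sid{p}{q}\in\Id(I)$ gives $p\eval{U}{V}=q\eval{U}{V}$, which is exactly the required equality. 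So the whole problem becomes: stratify $\MnT$ into an ideal that descends to $\Mat_{n-1}(\Trop)$ and a Rees quotient covered by \cite{trID}.

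For the inductive step I would fix a submultiplicative rank function $\rho$ on $\MnT$ (so $\rho(AB)\le\min\{\rho(A),\rho(B)\}$), making $I:=\{A\in\MnT:\rho(A)\le n-1\}$ a two-sided ideal. In $S/I=\MnT/I$ the nonzero elements have full rank and hence behave like tropically nonsingular matrices — any product dropping in rank becomes the zero of the Rees quotient, where every identity holds automatically — so the balanced form of the nonsingular-matrix identity of \cite{trID} gives $\MnT/I\models\sid{u}{v}$. On the other hand, a rank-$\le n-1$ matrix factors as an $n\times(n-1)$ matrix times an $(n-1)\times n$ matrix, so for $A,B\in I$ and any word $w$ the evaluation $w\eval{A}{B}$ re-associates as $P\,W\,Q$ with $P$ of size $n\times(n-1)$, $Q$ of size $(n-1)\times n$, and $W$ a product of $(n-1)\times(n-1)$ matrices indexed by the consecutive letter pairs (\emph{bigrams}) of $w$; pushing the inductively available balanced identity of $\Mat_{n-1}(\Trop)$ through this correspondence — via a word substitution $\{c,d\}^*\to\{a,b\}^*$ compatible with the de~Bruijn constraint governing which bigram sequences occur — yields a balanced $\sid{p}{q}\in\Id(I)$. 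Feeding $\sid{u}{v}$ and $\sid{p}{q}$ into the gluing lemma produces $\sid{u_n}{v_n}\in\Id(\MnT)$ and closes the induction; this is the explicit inductive construction alluded to in the introduction.

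The main obstacle is precisely the stratification: finding a rank $\rho$ that is at once submultiplicative, places the top Rees quotient within the reach of \cite{trID}, and makes the ideal $I$ genuinely reducible to dimension $n-1$; and then carrying out the word combinatorics cleanly — re-associating arbitrarily long products of low-rank matrices through the lower-dimensional blocks, and performing the de~Bruijn-compatible substitution so that a single balanced word pair emerges at each level. The base case — the identity for nonsingular tropical matrices, resting on their good behaviour under powering — is imported from \cite{trID} and I would take it as given.
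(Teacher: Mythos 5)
You should first note that the paper you are reconstructing from does not prove this statement at all: it is imported by citation as Theorem 3.7 of \cite{IzhakianMerletIdentity}, whose proof is indeed an inductive construction ``through matrix ranks'' seeded by identities of nonsingular/triangular matrices \cite{trID,mxID}. So your scaffolding --- reduction to balanced two-letter identities via Theorem \ref{thm:2id}, induction on $n$, an ideal of low-rank matrices treated by factoring through $n\times(n-1)$ and $(n-1)\times n$ matrices with bigram (de Bruijn) bookkeeping, and the standard gluing of an identity of the Rees quotient with one of the ideal --- is in the right spirit, and the gluing lemma and the nontriviality/balancedness trick are correct as stated.

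The genuine gap is exactly the point you label ``the main obstacle'' and then leave unresolved: no single rank $\rho$ does both jobs your argument requires, and the two natural choices fail in complementary ways. If $\rho$ is the factor (Barvinok) rank, then $I=\{\rho\le n-1\}$ is an ideal whose elements factor through $\Mat_{n-1}(\Trop)$ as you describe, but the nonzero part of $\MnT/I$ consists of \emph{all} matrices of full factor rank, which properly contains the tropically nonsingular ones; an identity known only for nonsingular matrices says nothing about a pair $A,B$ in which, say, $B$ is tropically singular yet of full factor rank and the evaluations $u\eval{A}{B}$, $v\eval{A}{B}$ retain full factor rank --- precisely the configuration the quotient identity must cover, and which does not reduce to the zero of the Rees quotient. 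If instead you stratify by tropical singularity, so that the quotient is genuinely governed by nonsingular matrices, then the ideal of singular matrices (an ideal by supertropical determinant theory \cite{IR2}) need \emph{not} consist of matrices of factor rank $\le n-1$, since tropical rank and factor rank differ in general; the factorization $P\,W\,Q$, and with it your inductive step for $I$, collapses. Bridging this mismatch between the ranks --- showing, roughly, that rank-deficient behaviour eventually forces a genuine factor-rank drop along long enough products --- is the substantive content of the cited proof, and nothing in your proposal supplies it; as written, the step ``$\MnT/I\models\sid{u}{v}$ by the nonsingular-matrix identity'' does not follow from the inputs you allow yourself.
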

\noindent In addition, \cite{IzhakianMerletIdentity} provides an  inductive construction of semigroup identities satisfied  by $\Mn(\Trop)$ which relies  on identities of nonsingular tropical  matrices \cite{trID}.


\subsection{Tropical polynomials}\label{ssec:polynimals} The semiring $\Trop[\lm_1,\dots,\lm_m]$ consists of polynomials in $m$ variables of the form
\begin{equation}\label{eq:polyToFunc1} \tpF \  = \ \bigvee_{\bfi \in
\Omega} \al_\bfi  \lm_1^{i_1}  \cdots  \lm_m^{i_m},
\end{equation}
where $\Omega\subset\N^{m}$ is a finite nonempty set of multi-induces
$\bfi =(i_1,\dots,i_m)$ for which $\al_\bfi \neq \zero$.  A polynomial ~$f$ is say to be \textbf{flat}, if  all  $\al_\bfi = \bt$ for  some  fixed  $\bt \in \Trop$.  By substitution, each polynomial $f \in \Trop[\lm_1,\dots,\lm_m]$ determines  a function $$\tlf:\Trop^{m} \longrightarrow \Trop, \qquad  (a_1, \dots, a_m) \mapsto  \tlf(a_1, \dots, a_m) :=  \bigvee_{\bfi \in
\Omega} \al_\bfi  a_1^{i_1}  \cdots  a_m^{i_m}.$$
It is well known that the map $f \mapsto \tlf$ is not injective, and that $f$ can be reduced to have only those monomials needed to describe $\tlf$.

Writing a polynomial $f = \bigvee_{\bfi} f_\bfi$ as a sum of monomials, we say that a monomial $f_\bfj$ is \textbf{inessential}, if $\tlf_\bfj(\bfa ) < \tlf(\bfa)$ for every $\bfa = (a_1, \dots, a_m) \in \Real^m$, cf. \cite{Iz}. A monomial $f_\bfj$ is \textbf{essential}, if
$\tlf_\bfj(\bfa) >  \bigvee_{\bfi \neq \bfj } \tlf_\bfi(\bfa)$ for some $\bfa \in \Real^m$,  and is \textbf{quasi-essential}, if it is inessential but
$\tlf_\bfj(\bfa) =  \bigvee_{\bfi} \tlf(\bfa)$ for some $\bfa \in \Real^m$. The \textbf{essential part} $\ef$ of~
 $f$ consists of all those monomials that are essential for $f$. The polynomial $\ef$ is unique  \cite[Lemma~ 1.6]{IzMr}, with $\tlf  = \tlef$ \cite[Proposition 1.5]{IzMr}. Therefore, for any $f,g \in \Trop[\lm_1, \dots, \lm_m]$,
 $\tlf = \tlg$ iff $\ef = \eg$.

For each monomial $f_\bfi$  of a polynomial $f$ of the form  \eqref{eq:polyToFunc1} there is the degree map
\begin{equation}\label{eq:deg}
 \dl: f_\bfi \longrightarrow \Net^m,  \qquad   \al_\bfi  \lm_1^{i_1}  \cdots  \lm_m^{i_m} \mapsto (i_1, \dots i_m).
\end{equation}
The \textbf{Newton polytope} $\Dl(f)$ of a polynomial $f   =  \bigvee_{\bfi \in
\Omega} \al_\bfi  \lm_1^{i_1}  \cdots  \lm_m^{i_m}$ is the convex hull of the set
$\{ \dl(f_\bfi) \ds| f_\bfi \text{ is a monomial of }  f \}$,
i.e., the convex  hull of the set of multi-indices $\Om \subset \N^m$. This lattice polytope has a \textbf{subdivision}
\begin{equation}\label{eq:subdiv}
\SD (f):  \Dl(f) = \Dl_1 \cup \dots \cup \Dl_\ell
\end{equation}
into disjoint lattice polytopes $\Dl_1, \dots, \Dl_\ell$, determined by projecting the upper part of the convex hull of the points  $(i_1, \dots, i_m, \al_{\bfi}) \in  \Real^{m+1}$ onto $\Dl(f) \subset \Real^m$.

The subdivision of $\Dl(f)$ yields a duality between faces of $\SD(f)$ and faces of the tropical hypersurface determined by $\tlf$ (cf. \cite{Gat}), and consequently a one-to-one correspondence between essential monomials of~ $f$ and vertices of $\SD(f) $, i.e.,  vertices of the $\Dl_i$'s in \eqref{eq:subdiv}. This gives the condition that a monomial~ $f_\bfi$ is essential for $f$ iff $\dl(f_\bfi)$ is a vertex of $\SD(f) $, and furthermore
$\tlf = \tlg $ iff $\SD(f) = \SD(g)$, for any $f,g \in \Trop[\lm_1, \dots, \lm_m]$.

   A quasi-essential monomial $f_\bfi$ of $f$ corresponds to a lattice point of $\SD(f)$ which is not a vertex, which  implies that $f$ has at least two essential monomials ~$f_\bfj$ and $f_\bfk$ for which $\tlf_\bfj(\bfa) = \tlf_\bfi(\bfa) = \tlf_\bfk(\bfa) = \tlf(\bfa)$ for some $\bfa \in \Real^m$.
   When~ $f$ is a flat polynomial, $\Dl(f) = \SD(f)$ and $\SD(f)$ has no internal vertices. Therefore, in this case, the essential monomials of ~$f$ correspond only to vertices of $\Dl(f)$.

Let $\Lm = (\Lm_{i,j})$ and $\Sig = (\Sig_{i,j})$ be two $\nxn$ matrices whose entries are  variables $\Lm_{i,j}$ and $\Sig_{i,j}$, $i,j = 1, \dots, n$. Let $\sid{u}{v} \in \Id(\Mn(\Trop))$ be an identity of $\Mn(\Trop)$, i.e., $u\eval{A}{B}  = v\eval{A}{B}$ for any $A,B \in \Mn(\Trop)$.  This means that  $u\eval{\Lm}{\Sig}$  and $v\eval{\Lm}{\Sig}$ define the same function $\Trop^{n^2} \times \Trop^{n^2}  \to \Trop^{n^2}$, which restricts to $n^2$ entry-functions $\Trop^{n^2} \times \Trop^{n^2}  \to \Trop$. Namely,  substituting  $\Lm$ and $\Sig$ into $u$ and $v$, each entry $ u\eval{\Lm}{\Sig}_{i,j}$ and $v\eval{\Lm}{\Sig}_{i,j}$ is a polynomial in~ $2 n^2$ variables for which  $\widetilde{ u\eval{\Lm}{\Sig}}_{i,j}= \widetilde{v\eval{\Lm}{\Sig}}_{i,j}$, and thus
\begin{equation}\label{eq:SD}
\SD\big({ u\eval{\Lm}{\Sig}}_{i,j}\big) = \SD\big({v\eval{\Lm}{\Sig}}_{i,j}\big).
\end{equation}
 These polynomials, which we denote by
$$ \fuij : = u\eval{\Lm}{\Sig}_{i,j}, \qquad \fvij := v\eval{\Lm}{\Sig}_{i,j},$$
are flat polynomials of the form
\begin{equation}\label{eq:polyform}
\bigvee_{\bfi \in \Om} \Lm_{1,1}^{\ell_{1,1}} \cdots \Lm_{n,n}^{\ell_{n,n}} \Sig_{1,1}^{m_{1,1}} \cdots \Sig _{n,n}^{m_{n,n}}, \qquad \bfi = (\ell_{1,1}, \dots, \ell_{n,n}, m_{1,1}, \dots, m_{n,n}) \in \Net^{2 n^2}.
\end{equation}
Therefore, \eqref{eq:SD} reads as
\begin{equation*}
\Dl\big(\fuij \big) = \Dl\big(\fvij \big).
\end{equation*}
Note that $ \fuij$ and $\fvij$ may have quasi-essential monomials.
We write $\widetilde{\fuij}(A,B)$ for the evaluation of the function $\widetilde{\fuij}: \Trop^{n^2} \times \Trop^{n^2}  \to \Trop^{n^2}$
at $A = (A_{k,l})$, $B = (B_{k,l})$.

\section{Tropical matrices vs. digraphs}\label{ssec:digraphs}

A matrix $A = (A_{i,j})$ over any semiring $R$ is associated uniquely to the \hlt{weighted digraph}
$\digr(A) := (\ver, \arc)$ over the node set
$\ver :=\{ \vx{1}, \dots, \vx{n}\}$ with   a directed arc
$\e_{i,j} := (\vx{i}, \vx{j}) \in \arc$ of \hlt{weight}~ $A_{i,j}$ from $\vx{i}$ to $\vx{j}$  for every
$A_{i,j} \ne \zero$.  $\digr(A)$ is called the digraph of the matrix $A$, and conversely $B$ is said to be the matrix of the weighted digraph $\digr'$, if $\digr' = \digr(B)$.

A \hlt{walk} $\gm$ on~$\digr(A)$ is a sequence of arcs
$\e_{i_1, j_1}, \dots, \e_{i_m, j_m} $, with $j_{k} = i_{k+1}$ for every
$k = 1,\dots, m-1$. We write $\gm := \gm_{i,j}$ to indicate that
$\gm$ is a walk from $\vx{i} = \vx{i_1}$ to $\vx{j}=\vx{j_m}$.
The \hlt{length} $\len{\gm}$ of a walk  $\gm$  is the number of
its arcs. 
The \hlt{weight} $\w(\gm)$ of
$\gm$ is the product of the weights of its arcs,  counting repeated arcs, taken with respect to the multiplicative operation of the  semiring $R$.

\begin{definition}[{\cite[Definition 1.7]{IzhakianMerletIdentity}}]\label{def:automata}
The \hlt{labeled-weighted digraph} $\Auto(A,B)$, written \hlt{lw-digraph}, of matrices $A,B\in\Mn(\Trop)$ is the digraph over the nodes~$\ver :=\{ 1,\dots, n\}$ with a directed arc $\e_{i,j}$
from~ $i$ to~ $j$ labeled~$a$ of weight~ $A_{i,j}$   for every
$A_{i,j} \ne \zero$ and a directed arc from $i$ to $j$ labeled $b$ of weight $B_{i,j}$  for every
$B_{i,j} \ne \zero$.
A walk~$\gm=\e_{i_1, j_1}, \dots, \e_{i_m, j_m}$ on~$\Auto(A,B)$ is labeled by the sequence of arcs' labels  along ~$\pth$, from~$\e_{i_1, j_1}$ to~$\e_{i_m, j_m}$,
which is a word $w$ in~$\{a,b\}^+$. A walk labeled $w$ is often denoted by $\pth_w$.
(In particular, every walk $\pth_w$ has length~$\wlen(w)$.) The weight $\w(\gm)$ of $\gm$ is the product of its arcs' weights with respect to the multiplicative operation of the  semiring $R$..
\end{definition}
\noindent The digraph $\Auto(A,B)$ may have parallel arcs, but with different labels. Note that $\Auto(A,A) = \digr(A)$.
With this definition, for tropical matrices we have the following proposition.
\begin{proposition}\label{pr:WalkInterpret}
  Given a word $w \in \{ a,b\}^+$  and matrices $A,B \in \MnT$,
  the $(i,j)$-entry of the matrix $w\eval{A}{B}$ is the maximum over the weights of all walks $\pth_{i,j} = \pth_w$ on~$\Auto(A,B)$ from $i$ to $j$ labeled by~$w$.
\end{proposition}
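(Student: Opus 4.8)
The plan is to unwind the definition of matrix multiplication over $\Trop$ and match it against the combinatorial description of walks. First I would recall that for $A, B \in \MnT$ and the word $w = c_1 c_2 \cdots c_m$ with each $c_k \in \{a,b\}$, the evaluation $w\eval{A}{B}$ is by definition the product $C^{(1)} C^{(2)} \cdots C^{(m)}$ in $\MnT$, where $C^{(k)} = A$ if $c_k = a$ and $C^{(k)} = B$ if $c_k = b$. Since multiplication in $\MnT$ is induced entrywise from the operations $\add = \max$ and $\mlt = \operatorname{sum}$ of $\Trop$, expanding this product gives
\begin{equation*}
\big(w\eval{A}{B}\big)_{i,j} \ = \ \bigvee_{\substack{i = k_0, \, k_m = j \\ k_1, \dots, k_{m-1} \in \{1,\dots,n\}}} C^{(1)}_{k_0, k_1} \mlt C^{(2)}_{k_1, k_2} \mlt \cdots \mlt C^{(m)}_{k_{m-1}, k_m},
\end{equation*}
the supremum being taken over all sequences $k_0, k_1, \dots, k_m$ of nodes with $k_0 = i$ and $k_m = j$.

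Next I would set up the bijection between the index sequences appearing in this sum and the walks $\pth_w$ on $\Auto(A,B)$ from $i$ to $j$ labeled by $w$. Given such a sequence $(k_0, \dots, k_m)$, each consecutive pair $(k_{t-1}, k_t)$ together with the letter $c_t$ picks out the arc $\e_{k_{t-1}, k_t}$ labeled $c_t$ in $\Auto(A,B)$ — this arc is present exactly when $C^{(t)}_{k_{t-1}, k_t} \ne \zero$, and its weight is precisely $C^{(t)}_{k_{t-1}, k_t}$. Conversely, a walk on $\Auto(A,B)$ labeled by $w$ from $i$ to $j$ is exactly a choice of such a sequence along which all the relevant entries are nonzero. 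Under this correspondence the weight $\w(\pth_w)$, being the $\mlt$-product of the arc weights, equals the summand $C^{(1)}_{k_0, k_1} \mlt \cdots \mlt C^{(m)}_{k_{m-1}, k_m}$. I would handle the $\zero = -\infty$ bookkeeping by noting that a summand involving a $\zero$ factor contributes $-\infty$ to the max and hence may be discarded, matching the fact that the corresponding arc is simply absent from $\Auto(A,B)$; if no walk labeled $w$ from $i$ to $j$ exists, both sides equal $-\infty$ (the empty maximum), so the identity still holds.

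Combining these two observations, the $\add$-sum over index sequences becomes the maximum over walks $\pth_w$ from $i$ to $j$, and each summand becomes the weight $\w(\pth_w)$, which is exactly the claimed statement. I expect no serious obstacle here: the proof is essentially a careful unpacking of definitions plus an induction on the length $m$ of $w$ (or equivalently an associativity argument for the iterated product). The only point requiring mild care is the convention for the empty maximum and the treatment of $\zero$ entries, so that the equality is literally true rather than true "up to removing zero terms"; this is where I would be most explicit in the write-up.
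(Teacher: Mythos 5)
Your argument is correct: the paper states Proposition \ref{pr:WalkInterpret} without proof, treating it as the standard unpacking of tropical matrix multiplication, and your expansion of $w\eval{A}{B}$ as a max over index sequences, together with the bijection between sequences of nonzero entries and walks labeled by $w$, is exactly the intended justification. Your explicit handling of the $-\infty$ entries and of the empty maximum (no walk from $i$ to $j$ labeled $w$) covers the only edge cases, so nothing is missing.
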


We next elaborate on the result of 
Theorem \ref{thm:IdExistence},
especially enhancing the interplay between matrix identities and lw-diagraphs.
Let $\pth$ be a walk on an  lw-digraph $\digr(A,B)$. We define $\conf(\pth) = (\mfAgm,\mfBgm)$ to be a pair of matrices $\mfAgm = (\mfAgm_{i,j})$ and $\mfBgm= (\mfBgm_{i,j})$ whose $(i,j)$-entry is respectively the number of occurrences of the arc $\e_{i,j}$ in $\pth$, called  \textbf{multiplicity},  labeled $a$ and $b$. The multiplicity of an arc  is ~ $0$, if it is not included in $\pth$, or it does not exist in $\digr(A,B)$.
 We call $\conf(\pth)$ the \textbf{configuration} of~ $\pth$, and write $\pth \cong_\conf \pth'$, if $\conf(\pth') =\conf(\pth)$.  Namely, if $\pth \cong_\conf \pth'$, then $\pth$ and  $\pth'$ consist exactly of the same arcs with the same multiplicities, but not necessarily with  the same ordering.
 Then, the equivalence $\pth \cong_\conf \pth'$ implies that
$\w(\pth) = \w(\pth')$ and $\len{\pth} = \len{\pth'}$. To simplify notations, we sometimes identify $\conf(\pth)$ with the point
$$ \conf(\pth) :=  (\mfAgm_{1,1},\dots, \mfAgm_{n,n}, \mfBgm_{1,1},\dots, \mfBgm_{n,n}) \in \Net^{2 n ^2}.$$

\begin{remark}\label{rmk:newton}
  Given a word $w \in \{ a,b\}^+$, let $\pth_w$ be a walk  on $\digr(A,B)$  from $i$ to $j$ labeled by $w$. Let $f:= \fwij$ be the polynomial $w\eval{\Lm}{\Sig}_{i,j}$ of the form \eqref{eq:polyform}.       The point $\conf(\pth_w) \in  \Net^{2 n^2} $ is the image $\dl(f_\bfi)$ of a monomial~ $f_\bfi$ of $f$ for which $\w(\pth_w) = f_\bfi( A,B ).$
       Thereby  a correspondence between labeled  walks $\pth_w$ on $\digr(A,B)$ from~ $i$ to $j$ and monomials $f_\bfi$ of $f$ is obtained. (Different walks may correspond to a same monomial.) We denote by $\chi(\pth_w)$ the monomial corresponding to a walk $\pth_w$, and thus have $\dl(\chi(\pth_w)) = \conf(\pth_w)$, cf. ~\eqref{eq:deg}.

       If $\gm_w$ is a walk of highest weight, then the corresponding monomial  $f_\bfi = \chi(\pth_w)$ is either essential or quasi-essential. The latter case implies that there at least two other walks of  weight $\w(\pth_w)$ from $i$ to~ $j$ labeled $w$,  since $f_\bfi$ corresponds to a lattice point of $\Dl(f)$ which is not a vertex, cf. \S\ref{ssec:polynimals}.
\end{remark}

Informally we have a correspondence $\chi$ between walks and monomials,
given by associating an arc $\e_{i,j}$ labeled $a$ to the  variable $\Lm_{i,j}$ and an arc $\e_{i,j}$ labeled $b$ to the  variable $\Sig_{k,\ell}$.  The powers of these variables in a monomial are   the arcs' multiplicities, encoded by the configuration of a walk.
\pSkip

Combining the above graph view and the polynomial view from  \S\ref{ssec:polynimals}, we have the following result.
\begin{lemma}\label{lem:paths}
  Let $\sid{u}{v}\in\Id(\MnT)$ be an identity for $\MnT$, and let $ \digr(A,B)$ be the lw-digraph of $A,B \in \Mn$. Fix $i$ and $j$, and  let $\pth_u$ be a  walk labeled $u$  of highest weight from $i$ to $j$ on $\digr(A,B)$. Let $f := \fuij = u\eval{\Lm}{\Sig}_{i,j}$ and $g:= \fvij = v\eval{\Lm}{\Sig}_{i,j}$.
   \begin{enumerate} \eroman
     \item If $\dl(\chi(\pth_u))$ is a vertex of $\Dl(f)$, then there exists a walk $\pth_v \cong_\conf \pth_u$  from $i$ to $j$  labeled  $v$ of  weight $\w(\pth_u)$ on $\digr(A,B)$.
         \item If $\dl(\chi(\pth_u))$ is not a vertex of $\Dl(f)$, then there are at least two walks $\pth_v',\pth_v'' \not \cong_\conf \pth_u$   from $i$ to $j$  labeled  $v$ of  weight $\w(\pth_u)$.
     \item If there exists another walk $\pth_u' \not \cong_\conf \pth_u$ labeled $u$ with $\w(\pth_u) = \w(\pth'_u)$, then there are walks $\gm_v' \not \cong_\conf \pth_v$ labeled  $v$ such that $\w(\pth_v) = \w(\pth'_v) = \w(\pth_u)$.

   \end{enumerate}

 \end{lemma}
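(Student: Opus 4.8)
The plan is to push the whole statement through the correspondence $\chi$ of Remark \ref{rmk:newton} into the combinatorics of the Newton polytope. First I would record what $\sid{u}{v}\in\Id(\MnT)$ gives: the entry functions agree, $\widetilde{\fuij}=\widetilde{\fvij}$, hence $\SD(f)=\SD(g)$ and in particular $\Dl(f)=\Dl(g)$, see \eqref{eq:SD}; and since $f=\fuij$ and $g=\fvij$ are flat polynomials, see \eqref{eq:polyform}, we have $\Dl(f)=\SD(f)$ and $\Dl(g)=\SD(g)$, so the essential monomials of $f$, resp.\ of $g$, are exactly those whose degree is a vertex of $\Dl(f)=\Dl(g)$. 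I would also note both directions of the walk/monomial dictionary: every essential monomial of $g$ equals $\chi(\pth_v)$ for some walk $\pth_v$ labeled $v$ from $i$ to $j$ on $\digr(A,B)$; and, by flatness, $\w(\pth_w)=\widetilde{\chi(\pth_w)}(A,B)$ is the value at $(A,B)$ of the linear functional whose coefficient vector is $\conf(\pth_w)=\dl(\chi(\pth_w))$, so the weight of a walk depends only on its configuration.

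The key geometric observation I would then isolate is this: since $\pth_u$ has highest weight, Proposition \ref{pr:WalkInterpret} gives $\w(\pth_u)=u\eval{A}{B}_{i,j}=\widetilde f(A,B)=\widetilde g(A,B)$, so $\conf(\pth_u)$ lies in the face $\tau$ of $\Dl(f)$ on which the functional $(A,B)$ attains its maximum; because $\Dl(f)=\Dl(g)$, this same $\tau$ is the maximizing face of $\Dl(g)$ as well. Hence a walk labeled $u$ or $v$ from $i$ to $j$ has weight $\w(\pth_u)$ if and only if its configuration lies in $\tau$, and any vertex of $\Dl(g)$ contained in $\tau$ is the configuration of some walk labeled $v$, necessarily of weight $\w(\pth_u)$.

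Granting this, the three parts fall out. For (i), if $\conf(\pth_u)=\dl(\chi(\pth_u))$ is a vertex of $\Dl(f)$, then --- lying in $\tau$ --- it is a vertex of $\tau$, hence of $\Dl(g)$; the essential monomial of $g$ of that degree is $\chi(\pth_v)$ for a walk $\pth_v$ labeled $v$, and $\conf(\pth_v)=\conf(\pth_u)$ yields $\pth_v\cong_\conf\pth_u$ with $\w(\pth_v)=\w(\pth_u)$. For (ii), if $\conf(\pth_u)$ is not a vertex of $\Dl(f)$ then, as it still lies in $\tau$, the face $\tau$ cannot be a single point, so $\dim\tau\ge1$ and $\tau$ carries two distinct vertices $\bfp\ne\bfq$ of $\Dl(g)$; the corresponding essential monomials of $g$ give walks $\pth_v',\pth_v''$ labeled $v$ with $\conf(\pth_v')=\bfp$ and $\conf(\pth_v'')=\bfq$, both of weight $\w(\pth_u)$ (their configurations lie in $\tau$) and both $\not\cong_\conf\pth_u$ (since $\conf(\pth_u)$ is not a vertex), while $\bfp\ne\bfq$ gives $\pth_v'\not\cong_\conf\pth_v''$. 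For (iii), let $\pth_v$ be a walk labeled $v$ of weight $\w(\pth_u)$ furnished by (i) or (ii), so $\conf(\pth_v)\in\tau$; a second walk $\pth_u'\not\cong_\conf\pth_u$ labeled $u$ with $\w(\pth_u')=\w(\pth_u)$ places a second distinct lattice point in $\tau$, forcing again $\dim\tau\ge1$, so $\tau$ has a vertex $\bfp\ne\conf(\pth_v)$; the essential monomial of $g$ of degree $\bfp$ is $\chi(\pth_v')$ for a walk $\pth_v'$ labeled $v$, with $\conf(\pth_v')=\bfp\in\tau$, whence $\w(\pth_v')=\w(\pth_u)=\w(\pth_v)$ and $\pth_v'\not\cong_\conf\pth_v$.

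I expect the main obstacle to be the bridge in the second paragraph: checking cleanly that a highest-weight walk labeled $w$ is the same thing as a walk whose configuration lies in the common maximizing face $\tau$ of $\Dl(f)=\Dl(g)$, and in particular that flatness makes $\w(\pth_w)$ literally the value at $(A,B)$ of the functional determined by $\conf(\pth_w)$, so that the maximizing face does not depend on whether one reads $u$ or $v$. Once that is secured, parts (i)--(iii) reduce to the elementary polytope fact that a point of a polytope lying in one of its faces but not being a vertex forces that face to be positive-dimensional --- hence to contain at least two vertices --- together with the dictionary of Remark \ref{rmk:newton} and \S\ref{ssec:polynimals}.
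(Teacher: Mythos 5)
Your proof is correct and follows essentially the same route as the paper: equality of the Newton polytopes $\Dl(f)=\Dl(g)$, the walk--monomial correspondence with vertices of the (flat) polytope being exactly the essential monomials, and the observation that a non-vertex maximizer forces at least two vertex maximizers. Your "maximizing face $\tau$" is just a clean repackaging of the paper's step via quasi-essential monomials attaining $\tlf(A,B)$, so the two arguments coincide in substance.
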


 \begin{proof} Clearly, since $(u\eval{A}{B})_{i,j} = (v\eval{A}{B})_{i,j}$, there exits a walk $\pth_v$ labeled $v$ for which   $\w(\pth_u) = \w(\pth_v)$.
 Since  $\sid{u}{v}\in\Id(\MnT)$, $f = u\eval{\Lm}{\Sig}_{i,j}$, and $g = v\eval{\Lm}{\Sig}_{i,j}$,  we have $\tlf= \tlg$ for the functions $\tlf, \tlg$;  in particular, $\tlf(A,B) = \tlg(A,B)$, and thus  $\Dl(f) = \Dl(g)$, cf. \S\ref{ssec:polynimals}. Since $\pth_u$ is of highest weight, and therefore ~$\pth_v$ is also of highest weight, there exist quasi-essential monomials $f_\bfi$, $g_\bfi$ of $f,g$ such that
  $$\tlf_\bfi(A,B) = (\chi(\pth_u))(A,B) = \w(\pth_u) = \w(\pth_v) = (\chi(\pth_v))(A,B)  = \tlg_\bfj(A,B).$$
   Let $\bfp = \dl(f_\bfi)$ and  $\bfq = \dl(g_\bfj)$ be the lattice points corresponding to these monomials in  $\Dl(f)$ and $\Dl(g)$. That is
   $\bfp = \conf(\pth_u) = \dl(\chi(\pth_u))$ and $\bfq = \conf(\pth_v) = \dl(\chi(\pth_v))$. Recall that $\Dl(f) = \Dl(g)$.
   \pSkip
   (i): If $\bfp$ is a vertex of $\Dl(f)$, then $\bfp$ is also a vertex of $\Dl(g)$. So, there is a walk $\pth_v$ for which $\bfp = \bfq$, and thus $\conf(\pth_u) = \conf(\pth_v)$. Hence $\pth_v \cong_\conf \pth_u$.

   \pSkip
   (ii):  If $\bfp$ is not a vertex of $\Dl(f)$, then $f_\bfi$ is a quasi-essential monomial of $f$, cf. \S\ref{ssec:polynimals}, and $f$ has at least two essential monomials $f_\bfj$ and $f_\bfk$ corresponding to vertices of $\Dl(f)$  for which $\tlf(A,B) =  \tlf_\bfj(A,B) = \tlf_\bfi (A,B) =\tlf_\bfk(A,B)$. Each of these monomials is associated to a walk labeled $u$ of weight $\w(\pth_u)$. Thus, by part (i), there are two walks $\pth'_v, \pth''_v$ labeled $v$ with $\w(\pth'_v) = \w(\pth''_v) = \w(\pth_u)$.

   \pSkip
   (iii): If there are two walks  $\pth_u \not \cong_\conf \pth_u'$ labeled $u$ of highest weight, then each walk is associated to a different monomial of~$f$. These monomials correspond to different lattice points $\bfp$ and $\bfp'$  in $\Dl(f)$.
   If both $\bfp$ and $\bfp'$ are vertices of $\Dl(f)$,  then we are done by part (i). Otherwise,  one of these lattice points   is not a vertex of $\Dl(f)$,  and the proof  follows from part (ii).
 \end{proof}

\section{Semigroup identities of supertropical matrices}\label{sec:5}

Matrices over $\STrop$ are matrices with entries in $\STrop$; the monoid of these matrices is denoted by $\Mn(\STrop)$.
The map $\nu: \STrop \to \Trop^\nu $ extends entry-wise to  matrices, i.e., $\nu(A) = (\nu(A_{i,j}))$, inducing the $\nu$-equivalence~$\cong_\nu$ on matrices   in $\STn$. By the structure of $\STrop$, for any matrix $A \in \STn$ there exists a matrix $\htA$ with entries in $\Trop = \Real \cup \{ \zero \}$ such that $\htA \cong_\nu A$.

Similar to  matrices in $\MnT$, cf. \S\ref{sec:2}, each supertropical matrix $A \in \Mn(\STrop)$ is associated with a labeled-weighted diagraph $\digr(A)$, while now arcs  have their weights $\STrop$.  A walk $\pth$ on $\digr(A)$ has weight $\w(\pth)$ in $\Real^\nu$, if one of its arcs has a weight in $\Real^\nu$. Otherwise, $\pth$ has weight in $\Real$.  A walk $\pth$ from $i$ to $j$ is said to have highest $\nu$-weight, if $\nu(\w(\gm))$ is maximal among the weights of all other walks from $i$ to $j$. Each arc $\e_{i,j}$ of $\digr(A)$ having weight in $\Real^\nu$ can be interpreted as a double arc, i.e., two parallel arcs from $i$ to~ $j$ with the same label and the same weight  in $\Real$. $\hdigr(A)$ denotes the digraph taken with this interpretation.

\begin{lemma}\label{lem:nuEqMat.1}
 Given an identity $\sid{u}{v} \in\Id(\Mn(\Trop))$, then $u\eval{A}{B} \cong_\nu  v\eval{A}{B}$ for any   $A,B \in \STn$.
\end{lemma}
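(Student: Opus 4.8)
The plan is to reduce the supertropical statement to the already-established tropical identity $\sid{u}{v}\in\Id(\Mn(\Trop))$ by tracking $\nu$-classes through matrix multiplication. First I would recall that for any $A\in\STn$ there is a tropical matrix $\htA$ (entries in $\Trop=\Real\cup\{\zero\}$) with $\htA\cong_\nu A$; fix such $\htA,\htB$ for the given $A,B$. The key elementary fact, to be isolated as a sub-step, is that $\nu$ is multiplicative and additive-compatible in the appropriate sense: if $x\cong_\nu x'$ and $y\cong_\nu y'$ in $\STrop$ then $xy\cong_\nu x'y'$ and $x\add y\cong_\nu x'\add y'$ (this follows directly from the defining formulas for $\add$ and $\cdot$ on $\STrop$, since both operations factor through $\nu$ on the $\nu$-values). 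Consequently $\nu:\STrop\to\Trop^\nu\cong\Trop$ is a semiring homomorphism, hence $\nu(MN)=\nu(M)\nu(N)$ entry-wise for matrices, so the map $M\mapsto\nu(M)$ is a monoid homomorphism $\Mn(\STrop)\to\Mn(\Trop^\nu)\cong\Mn(\Trop)$.

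From that point the argument is short. For any word $w\in\{a,b\}^+$ we get $\nu\big(w\eval{A}{B}\big)=w\eval{\nu(A)}{\nu(B)}$ computed in $\Mn(\Trop^\nu)$, and since $\Trop^\nu$ is isomorphic to $\Trop$ as a semiring this equals (the image of) $w\eval{\htA}{\htB}$ computed in $\Mn(\Trop)$, because $\nu(A)$ corresponds to $\htA$ under the isomorphism. Applying this to both $u$ and $v$ and using $u\eval{\htA}{\htB}=v\eval{\htA}{\htB}$ in $\Mn(\Trop)$ — which holds precisely because $\sid{u}{v}\in\Id(\Mn(\Trop))$ — yields $\nu\big(u\eval{A}{B}\big)=\nu\big(v\eval{A}{B}\big)$, i.e. $u\eval{A}{B}\cong_\nu v\eval{A}{B}$ entry-wise, which is the claim.

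I expect the main (and really the only) obstacle to be the verification that $\nu$ respects both semiring operations on $\STrop$, in particular the additive identity $\nu(x\add y)=\nu(x)\add\nu(y)=\nu(x)\oplus\nu(y)$: one must check the case split in the definition of $\add$ on $\STrop$ (the $\nu(x)=\nu(y)$ ghost-collapse case versus the $\nu(x)\ne\nu(y)$ max case) and confirm that in every case the $\nu$-value agrees with $\max\{\nu(x),\nu(y)\}$, which is the addition in $\Trop^\nu$. Once this and the (straightforward) multiplicative compatibility are in hand, everything else is formal: matrix multiplication is built from $\add$ and $\cdot$, so a semiring homomorphism $R\to S$ induces a monoid homomorphism $\Mn(R)\to\Mn(S)$, and semigroup identities are preserved under such homomorphisms. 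It may be cleanest to phrase the proof as: (1) $\nu$ is a semiring epimorphism $\STrop\twoheadrightarrow\Trop^\nu$; (2) hence $\nu_*:\Mn(\STrop)\to\Mn(\Trop^\nu)$ is a monoid homomorphism; (3) $\Mn(\Trop^\nu)\cong\Mn(\Trop)$; (4) apply $\nu_*$ to the identity $u\eval{A}{B}$ vs $v\eval{A}{B}$ and invoke $\sid{u}{v}\in\Id(\Mn(\Trop))$.
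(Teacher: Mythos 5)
Your proposal is correct and follows essentially the same route as the paper, which simply invokes that $\nu$ induces a homomorphism onto $\Mn(\Trop^\nu)\cong\Mn(\Trop)$ and applies the identity there; you merely spell out the verification that $\nu$ is a semiring homomorphism, which the paper leaves implicit. (The detour through $\htA,\htB$ is harmless but unnecessary, since one can apply the identity directly to $\nu(A),\nu(B)$ in $\Mn(\Trop^\nu)$.)
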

\begin{proof}  The monoid $\Mn(\Trop^\nu)$ of all matrices over $\Trop^\nu$ is isomorphic to  $\MnT$, and thus $\nu(u\eval{A}{B}) = \nu(v\eval{A}{B})$, which by definition implies $u\eval{A}{B}
\cong_\nu  v\eval{A}{B}$.
\end{proof}

\begin{remark}\label{rem:paths}
  Lemma \ref{lem:paths} remains valid for matrices in $\STn$. Indeed,  mark each arc wether its weight is in $\Real$ or in $\Real^\nu$, apply the lemma for the images  $\nu(A)$ and $\nu(B)$ of the matrices $A$ and $B$ to find the walks, then recover the walks' weight (which could be either $\Real$ or in $\Real^\nu$) from the marking of their arcs.

\end{remark}

\begin{lemma}\label{lem:nuEqMat.2}
 Given an identity $\sid{u}{v} \in\Id(\MnT)$, then  $u\eval{A}{B} = v\eval{A}{B}$ for any   $A\cong_\nu B$ in~ $\STn$.
\end{lemma}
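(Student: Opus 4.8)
The plan is to reduce the matrix identity to an entrywise statement about \emph{tangibility}. Fix $A\cong_\nu B$ in $\STn$. By Lemma~\ref{lem:nuEqMat.1} we already have $u\eval{A}{B}\cong_\nu v\eval{A}{B}$, so for each entry $(i,j)$ it suffices to show that $(u\eval{A}{B})_{i,j}$ and $(v\eval{A}{B})_{i,j}$ are either both $\zero$, both tangible, or both ghost. First I would record that $\wlen(u)=\wlen(v)=:m$, which follows by evaluating $\sid{u}{v}$ at scalar matrices. Since $A\cong_\nu B$, the digraphs $\digr(A)$ and $\digr(B)$ have the same arcs, so expanding the matrix products shows that $(u\eval{A}{B})_{i,j}$ and $(v\eval{A}{B})_{i,j}$ are $\oplus$-sums over one and the same index set $\mathcal P$ of node sequences $P=(i=p_0,\dots,p_m=j)$ all of whose consecutive arcs are present; write $\pth_u(P)$, $\pth_v(P)$ for the walk on $P$ read with the labels of $u$ resp.\ $v$. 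Because $\nu(A)=\nu(B)$, the quantity $\nu(\w(\pth_u(P)))=\nu(\w(\pth_v(P)))$ depends only on $P$; let $M\in\Real^\nu$ be its maximum over $\mathcal P$ (both entries being $\zero$ if $\mathcal P=\emptyset$) and set $\mathcal P^{*}:=\{P\in\mathcal P:\ \nu(\w(\pth_u(P)))=M\}$.

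Discarding absorbed terms, $(u\eval{A}{B})_{i,j}=\bigoplus_{P\in\mathcal P^{*}}\w(\pth_u(P))$ and symmetrically for $v$, with every surviving summand of $\nu$-value $M$. If $|\mathcal P^{*}|\ge 2$, both sums equal the ghost element $M$, and the two entries agree. The decisive case is $\mathcal P^{*}=\{P_0\}$: then $(u\eval{A}{B})_{i,j}=\w(\pth_u(P_0))$ and $(v\eval{A}{B})_{i,j}=\w(\pth_v(P_0))$, and $\w(\pth_u(P_0))$ lies in $\Real^\nu$ exactly when $\pth_u(P_0)$ uses some arc whose relevant weight — the $A$-entry at a step labelled $a$, the $B$-entry at a step labelled $b$ — lies in $\Real^\nu$, the same criterion applying to $\pth_v(P_0)$. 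Since that criterion reads off only the multiset of $a$-labelled arcs and of $b$-labelled arcs of the walk, i.e.\ its configuration, the proof is complete once I show $\conf(\pth_u(P_0))=\conf(\pth_v(P_0))$.

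For this I would apply Lemma~\ref{lem:paths}, as extended to $\STn$ in Remark~\ref{rem:paths} (mark each arc by whether its weight is tangible or in $\Real^\nu$ and work through $\digr(\nu(A),\nu(B))$ over $\Mn(\Trop^\nu)\cong\MnT$), to the highest-$\nu$-weight $u$-walk $\pth_u(P_0)$ and to $f:=\fuij$. Were $\dl(\chi(\pth_u(P_0)))=\conf(\pth_u(P_0))$ \emph{not} a vertex of $\Dl(f)$, part~(ii) would produce two distinct $v$-walks of top $\nu$-weight, hence two distinct members of $\mathcal P^{*}$, contradicting $\mathcal P^{*}=\{P_0\}$; so it is a vertex, and part~(i) supplies a $v$-walk $\pth$ with $\conf(\pth)=\conf(\pth_u(P_0))$ (hence $\w(\pth)=\w(\pth_u(P_0))$, of $\nu$-value $M$). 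Being of top $\nu$-weight, $\pth$ must equal $\pth_v(P_0)$, so $\conf(\pth_v(P_0))=\conf(\pth_u(P_0))$, as required. (Equivalently: as $P_0$ is the unique maximiser, $\chi(\pth_u(P_0))$ is the only monomial of $f$ attaining $M$ at $(\nu(A),\nu(B))$, hence essential, hence has vertex degree in the flat Newton polytope $\Dl(f)$.)

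I expect the singleton case $\mathcal P^{*}=\{P_0\}$ to be the only genuine obstacle: a priori the unique surviving $u$-walk and the unique surviving $v$-walk over $P_0$ could traverse the arcs of $P_0$ in different orders and so meet $\Real^\nu$-arcs differently. What rescues it is that Lemma~\ref{lem:paths}(i) returns a $v$-walk with \emph{exactly} the configuration of $\pth_u(P_0)$, and the uniqueness of $P_0$ as a $\nu$-weight maximiser pins that walk down to $\pth_v(P_0)$; ghostness of a walk's weight then depends only on that common configuration. The opening observations — $\wlen(u)=\wlen(v)$ and the fact that the index sets $\mathcal P$, $\mathcal P^{*}$ are the same for $u$ and for $v$ — are exactly the bookkeeping needed for this identification.
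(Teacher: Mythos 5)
Your proof is correct, and its engine is the same as the paper's: reduce to an entrywise comparison of ghostness and invoke Lemma~\ref{lem:paths}(i)--(ii) through Remark~\ref{rem:paths}. Where you differ is the case decomposition. You first observe $\wlen(u)=\wlen(v)$ and that $A\cong_\nu B$ forces a common support, so both entries expand over one common index set of node sequences, and you split according to the number of $\nu$-weight maximisers: the case of two or more maximisers makes both entries the ghost of the top $\nu$-value at once, and only the singleton case needs the Newton-polytope lemma (where, as you note, equal configurations in fact give literally equal weights, slightly stronger than matching ghostness). The paper instead splits according to the \emph{source} of ghostness of $U_{i,j}$ -- a ghost arc on a top walk versus two tangible top walks -- and handles the latter case, as well as the $\zero$ and tangible cases, via the auxiliary evaluation $u\evalb{\htA}{\htB}=u\evalb{\htA}{\htA}=v\evalb{\htA}{\htA}=v\evalb{\htA}{\htB}$ at a common tangible lift $\htA=\htB$ (equation $(*)$ in the paper's proof). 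Your route buys the elimination of that auxiliary matrix $Z$ and makes the absorption bookkeeping explicit, at the modest cost of the scalar-matrix argument for $\wlen(u)=\wlen(v)$, which the paper's use of $\htA^{\len{u}}$ leaves implicit; both arguments rely equally on reading Lemma~\ref{lem:paths}(ii) as producing two genuinely distinct $v$-walks, which is what its proof (two distinct vertices of $\Dl(f)$) delivers.
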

\begin{proof}

 Let  $U := u\eval{A}{B}$ and   $V :=   v \eval{A}{B}$.  Since $A \cong_\nu B$, then
 $U\cong_\nu V$. Moreover,  $\htA  = \htB$,
  and thus
\begin{equation}\label{eq:str}
 u\evalb{\htA}{\htB}  = u\evalb{\htA}{\htA} = Z =  v\evalb{\htA}{\htA} = v\evalb{\htA}{\htB} . \tag{$*$}
\end{equation}
 Hence, $Z$ is a matrix in $\STn$, obtained as the  power $\len{u}$ of the matrix $\htA$ in $\Mn(\Trop)$. Accordingly, an entry $Z_{i,j}$ of $Z$ corresponds to a walk of highest weight on $\digr(\htA)$. Let $U' :=  u\evalb{\htA}{\htB}$ and $V' :=  v\evalb{\htA}{\htB}$, which are equal by \eqref{eq:str}.

Fixing ~$(i,j)$, the entry $U_{i,j}$ (resp. $V_{i,j}$) corresponds to walks $\Gm_{u}$ (resp. $\Gm_{v}$) from $i$ to $j$  labeled $u$ (reps.~ $v$)  of highest $\nu$-weight on $\digr(A,B)$. In particular, $\w(\pth_u) \cong_\nu \w(\pth'_u)$ for all $\pth_u , \pth'_u\in \Gm_u$. The same holds for $\Gm_v$.
 Note that,  $U'_{i,j } \in \Trop^\nu$ (resp. $V'_{i,j } \in \Trop^\nu$) implies $U_{i,j } \in \Trop^\nu$ (resp. $V_{i,j } \in \Trop^\nu$).

We need to prove  that, if $U_{i,j} \in \Trop^\nu$, then also $V_{i,j} \in \Trop^\nu$.
If $U_{i,j} = \zero$, then $U'_{i,j} = \zero$. Thus $V'_{i,j} = \zero$ by~ \eqref{eq:str}, and hence $V_{i,j} = \zero$.
If both $U_{i,j}$ and $V_{i,j}$ are in $\Real$, then $U_{i,j} = U'_{i,j} = V'_{i,j} = V_{i,j}$ by~\eqref{eq:str}.
 Otherwise, 
say $U_{i,j}$ has a value in $\Real^\nu$, for which we have two cases:
\begin{enumerate}\ealph
  \item $\Gm_{u}$ includes a walk $\pth_u$ having weight in $\Real^\nu$,
  \item $\Gm_{u}$ includes only walks $\pth_u$ having weight in $\Real$.
\end{enumerate}
Let $\fuij = u\eval{\Lm}{\Sig}_{i,j}$ and $\fvij = v\eval{\Lm}{\Sig}_{i,j}$.

\pSkip
\underline{Case (a)}: $\w(\pth_u) \in \Real^\nu$ implies that one for the arcs composing $\pth_u$, say $\e_{s,t}$, has weight in $\Real^\nu$.
If $\conf(\pth_u)$ is a vertex of $\Dl(\fuij)$, then by Lemma~\ref{lem:paths}.(i), in the view of Remark \ref{rem:paths}, $\Gm_v$ contains a walk $\pth_v$  with
$\conf(\pth_v) = \conf(\pth_u)$. This means that $\pth_v$ consists of the same arcs as ~$\pth_u$, in particular  $\e_{s,t}$ is an arc of~ $\pth_v$ with $\w(\e_{s,t}) \in \Real^\nu$, implying that $\w(\pth_v)\in \Real^\nu$. Thus, $V_{i,j} \in \Real^\nu$, and hence $V_{i,j} = U_{i,j}$.

If $\conf(\pth_u)$ is not a vertex of $\Dl(\fuij)$, then by  Lemma~\ref{lem:paths}.(ii) and  Remark \ref{rem:paths}, $\Gm_v$  contains at least two walks $\pth'_v, \pth''_v$  with $\w(\pth'_v) = \w(\pth''_v) = \w(\pth_u)$. Thus $V_{i,j} = \w(\pth'_v) + \w(\pth''_v) = \w(\pth_v)^\nu \in \Real^\nu$, and hence $V_{i,j} = U_{i,j}$.

\pSkip
\underline{Case (b)}: Since $U_{i,j} \in \Real^\nu$,  $\Gm_u$ contains at least two walks $\pth'_u, \pth''_u$ with weight in $\Real$, and thus $\pth'_u$ and $ \pth''_u$ are also walks on $\digr(\htA)$ -- the digraph of $v\evalb{\htA}{\htB}$, cf.~\eqref{eq:str}.
Therefore, $\w(\pth'_u) + \w(\pth''_u) = \w(\pth'_u)^\nu = V'_{i,j} \in \Real^\nu$, implying that  $V_{i,j} \in \Real^\nu$, and hence $U_{i,j} = V_{i,j}$.
\end{proof}

\begin{theorem}\label{thm:ST.IdExistence}
 Let $\sid{u}{v}$ and $ \sid{u'}{v'}$ be identities  for $\MnT$.
  The monoid $\STn$ admits the  semigroup identity $ \sid{u\substit{u'}{v'}}{v\substit{u'}{v'}} $.
\end{theorem}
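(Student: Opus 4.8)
The plan is to deduce the statement directly from Lemmas~\ref{lem:nuEqMat.1} and~\ref{lem:nuEqMat.2} by a ``substitution commutes with evaluation'' argument, so that no new combinatorial work is required. Fix arbitrary matrices $A,B \in \STn$ and put $A' := u'\eval{A}{B}$ and $B' := v'\eval{A}{B}$, which are again matrices in $\STn$. The assignment $a \mapsto u'$, $b \mapsto v'$ extends to a semigroup homomorphism $\varX^+ \to \varX^+$ carrying $w$ to $w\substit{u'}{v'}$, while $a \mapsto A$, $b \mapsto B$ extends to a homomorphism $\varX^+ \to \STn$; the composite of the two is a homomorphism sending $a \mapsto A'$, $b \mapsto B'$, hence $w \mapsto w\eval{A'}{B'}$, and it also equals $w\substit{u'}{v'}$ evaluated at $(A,B)$. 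Therefore
\[
(w\substit{u'}{v'})\eval{A}{B} \ = \ w\eval{A'}{B'} \qquad \text{for every } w \in \varX^+,
\]
in particular for $w = u$ and $w = v$. Thus it suffices to show $u\eval{A'}{B'} = v\eval{A'}{B'}$.

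Since $\sid{u'}{v'} \in \Id(\MnT)$, Lemma~\ref{lem:nuEqMat.1} applied to $A,B \in \STn$ gives $A' = u'\eval{A}{B} \cong_\nu v'\eval{A}{B} = B'$. Now apply Lemma~\ref{lem:nuEqMat.2} to the identity $\sid{u}{v} \in \Id(\MnT)$ and the $\nu$-equivalent pair $A' \cong_\nu B'$: it yields exactly $u\eval{A'}{B'} = v\eval{A'}{B'}$. Combining this with the displayed equality, $(u\substit{u'}{v'})\eval{A}{B} = (v\substit{u'}{v'})\eval{A}{B}$; as $A,B$ were arbitrary, $\STn$ satisfies $\sid{u\substit{u'}{v'}}{v\substit{u'}{v'}}$.

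There is essentially no obstacle inside this argument --- all the effort has already been spent in Lemmas~\ref{lem:paths}, \ref{lem:nuEqMat.1} and~\ref{lem:nuEqMat.2} and the underlying Newton-polytope analysis. The one point deserving a separate remark, rather than being folded into the proof, is that the manufactured identity is genuinely nontrivial as a word equation: this holds whenever $u \neq v$ and the substitution $a \mapsto u'$, $b \mapsto v'$ is injective on $\varX^+$ (for instance when $u'$ and $v'$ start with different letters, as in the inputs used to derive Corollary~\ref{cor:ST.IdExistence}). Conceptually, the theorem shows that a semigroup identity for $\STn$ is obtained by nesting one tropical identity inside another: the inner identity $\sid{u'}{v'}$ produces a $\nu$-equivalent pair $A' \cong_\nu B'$, and the outer identity $\sid{u}{v}$ is precisely what is needed --- via Lemma~\ref{lem:nuEqMat.2} --- to collapse such a pair on the nose rather than merely up to $\nu$.
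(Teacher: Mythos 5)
Your proof is correct and follows essentially the same route as the paper: the paper's (one-line) argument is exactly to get $u'\eval{A}{B} \cong_\nu v'\eval{A}{B}$ from Lemma~\ref{lem:nuEqMat.1} and then apply Lemma~\ref{lem:nuEqMat.2} to that $\nu$-equivalent pair, and your explicit ``substitution commutes with evaluation'' step merely spells out what the paper leaves implicit. Your closing remark on nontriviality of the composed word identity is a sensible supplementary observation, but it is not part of the theorem's statement or the paper's proof.
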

\begin{proof}
$U' := u'\eval{A}{B} \cong_\nu  v'\eval{A}{B} := V'$  by Lemma \ref{lem:nuEqMat.1}, then $u\eval{U'}{V'} = v\eval{U'}{V'}$ by Lemma ~\ref{lem:nuEqMat.2}.
\end{proof}

\begin{corollary}\label{cor:ST.IdExistence}
  The monoid $\STn$ admits nontrivial  semigroup identities.

\end{corollary}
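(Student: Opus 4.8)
The plan is to read off the corollary from Theorems~\ref{thm:IdExistence} and \ref{thm:ST.IdExistence} with no new ideas, the one point requiring care being \emph{nontriviality} of the resulting identity. First I would use Theorem~\ref{thm:IdExistence} to fix a nontrivial semigroup identity $\sid{u}{v}\in\Id(\MnT)$, so that $u\neq v$ in $\varX^+$. Applying Theorem~\ref{thm:ST.IdExistence} to the pair $\sid{u}{v}$, $\sid{u}{v}$ (taking $\sid{u'}{v'}:=\sid{u}{v}$, although any second nontrivial identity of $\MnT$ would serve equally well) immediately yields that $\STn$ satisfies the formal equality $\sid{u\substit{u}{v}}{v\substit{u}{v}}$.

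What remains is to check that $u\substit{u}{v}\neq v\substit{u}{v}$ as words in $\varX^+$, equivalently that the substitution endomorphism of $\varX^+$ sending $a\mapsto u$, $b\mapsto v$ is injective; this holds as soon as $\{u,v\}$ is a code, i.e.\ $uv\neq vu$ (the standard characterization of two-element codes). To obtain $uv\neq vu$ I would first note that every nontrivial semigroup identity of $\MnT$ equates words of equal length: substituting $a=b=\diag(1,\dots,1)$, a scalar (hence central) matrix in $\MnT$ with $\diag(1,\dots,1)^k=\diag(k,\dots,k)$, turns $\sid{u}{v}$ into $\diag\big(\wlen(u),\dots,\wlen(u)\big)=\diag\big(\wlen(v),\dots,\wlen(v)\big)$, forcing $\wlen(u)=\wlen(v)$. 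Since $u$ and $v$ are distinct words of the same length, they cannot both be powers of one and the same word, so $uv\neq vu$; therefore the morphism $a\mapsto u$, $b\mapsto v$ is injective, and $u\neq v$ gives $u\substit{u}{v}\neq v\substit{u}{v}$.

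Combining the two steps, $\sid{u\substit{u}{v}}{v\substit{u}{v}}$ is a \emph{nontrivial} semigroup identity satisfied by $\STn$, which is the assertion. I do not anticipate a genuine obstacle here: all the substance has been absorbed into Theorem~\ref{thm:ST.IdExistence} and, through it, into Lemmas~\ref{lem:nuEqMat.1}--\ref{lem:nuEqMat.2}; the corollary is a packaging step whose only delicate ingredient is the elementary word-combinatorics argument that rules out the degenerate possibility $u\substit{u}{v}=v\substit{u}{v}$.
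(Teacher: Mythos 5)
Your proposal is correct and follows the paper's own route: the paper proves the corollary simply by combining Theorem~\ref{thm:IdExistence} (a nontrivial identity $\sid{u}{v}$ of $\MnT$ exists) with Theorem~\ref{thm:ST.IdExistence} applied to that identity. Your additional verification that $u\substit{u}{v}\neq v\substit{u}{v}$ --- via equal lengths (scalar substitution) and the two-word code criterion $uv\neq vu$ --- is sound and makes explicit a nontriviality point the paper leaves implicit, but it is a refinement of the same argument rather than a different approach.
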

\begin{proof} Immediate by Theorem \ref{thm:ST.IdExistence}, since
  $\MnT$ admits nontrivial identities by Theorem
  \ref{thm:IdExistence}.
\end{proof}

Given a semiring $R$, a   (linear) representation of a semigroup $\tS$ is a semigroup homomorphism $$\rho: \tS \To \Mn(R),$$ i.e., $\rho(st) = \rho(s) \rho(t)$  for any  $s,t \in \tS.$
$\rho$ is said to be faithful, if it is injective.

\begin{corollary}\label{cor:rep.super.id}
Any semigroup which is faithfully represented  in $\STn$  satisfies a nontrivial semigroup identity.\end{corollary}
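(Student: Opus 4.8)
The plan is to derive this as an immediate consequence of Corollary \ref{cor:ST.IdExistence}, which already asserts that $\STn$ admits a nontrivial semigroup identity. The only point to verify is that a nontrivial identity satisfied by a semigroup $\tS$ is inherited by any sub-semigroup, and more precisely transfers along any \emph{injective} homomorphism in the reverse direction.

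First I would invoke Corollary \ref{cor:ST.IdExistence} to fix a nontrivial semigroup identity $\sid{u}{v} \in \Id(\STn)$, with $u \neq v$ in $\varX^+$ where $\varX = \{a,b\}$. Next, let $\tS$ be a semigroup together with a faithful representation $\rho : \tS \To \STn$, i.e. an injective semigroup homomorphism. I claim $\sid{u}{v} \in \Id(\tS)$. Indeed, let $\phi : \varX^+ \To \tS$ be any semigroup homomorphism; then $\rho \circ \phi : \varX^+ \To \STn$ is again a semigroup homomorphism, so by $\sid{u}{v} \in \Id(\STn)$ and condition \eqref{eq:s.id} we get $\rho(\phi(u)) = (\rho \circ \phi)(u) = (\rho \circ \phi)(v) = \rho(\phi(v))$. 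Since $\rho$ is injective, this forces $\phi(u) = \phi(v)$. As $\phi$ was arbitrary, $\sid{u}{v} \in \Id(\tS)$ by \eqref{eq:s.id}, and since $u \neq v$ the identity is nontrivial. Hence $\tS$ satisfies a nontrivial semigroup identity.

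There is essentially no obstacle here: the argument is the standard fact that varieties of semigroups (defined by identities) are closed under taking sub-semigroups, specialized to the image $\rho(\tS) \subseteq \STn$ and pulled back through the isomorphism $\tS \cong \rho(\tS)$. The one mild subtlety worth a sentence is that evaluations of words commute with homomorphisms — that is, $(\rho \circ \phi)(w) = \rho(\phi(w))$ for every $w \in \varX^+$ — which is immediate from $\rho$ being a semigroup homomorphism and induction on the length of $w$ (equivalently, using the notation $w\eval{s}{t}$, one has $\rho\big(w\eval{s}{t}\big) = w\eval{\rho(s)}{\rho(t)}$). Everything else is formal.
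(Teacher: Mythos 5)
Your argument is correct and is exactly the intended one: the paper states this corollary without proof, as an immediate consequence of Corollary \ref{cor:ST.IdExistence}, relying on the standard fact you spell out, namely that an identity of $\STn$ pulls back along the injective homomorphism $\rho$ since $\rho\big(w\eval{s}{t}\big) = w\eval{\rho(s)}{\rho(t)}$. Nothing is missing; your write-up simply makes explicit what the paper leaves implicit.
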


Considering the weighted digraphs $\hdigr(A)$  with double arcs  associated to matrices $A \in \Mn(\STrop)$,  Theorem \ref{thm:ST.IdExistence} receives the following  meaning.

\begin{corollary}\label{cor:grph.walks}
For any  labeled-weighted digraph $\digr$  with  arcs labeled by $\{ a,b \}$, possibly with double arcs,
there are two different words $u, v \in \{ a,b \}^+$,
such that, for any pair~$i,j$ of nodes of $\digr$,
the highest weight of walks from~$i$ to~$j$ is the same for walks labeled~$u$
and for walks labeled~$v$. In addition, if the there are a unique walk of  highest weight labeled $u$, then there is a unique walk labeled $v$.
Moreover, there is a pair of words that works for all digraphs having a given number of nodes.
\end{corollary}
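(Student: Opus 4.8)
The plan is to transcribe the digraph into a pair of supertropical matrices, invoke Theorem~\ref{thm:ST.IdExistence}, and read the conclusion off the arithmetic of $\STrop$. First I would let $n$ be the number of nodes of $\digr$ and build $A,B\in\Mn(\STrop)$ as follows: $A_{i,j}$ is the weight of the $a$-labeled arc from $i$ to $j$ when that arc is simple, the ghost $\nu$ of that weight when it is a double arc, and $\zero$ when there is no $a$-labeled arc from $i$ to $j$; define $B$ from the $b$-labeled arcs in the same way. Then, up to the double-arc convention, $\digr$ is the digraph $\digr(A,B)$ of Section~\ref{sec:5}, and $\hdigr(A,B)$ — obtained by splitting each ghost arc into two parallel real arcs — is exactly the digraph appearing in the statement.

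The second step is a supertropical refinement of Proposition~\ref{pr:WalkInterpret}: for every $w\in\{a,b\}^+$ the $(i,j)$-entry of $w\eval{A}{B}$ equals the supertropical sum $\Add$ of the weights $\w(\pth_w)$ taken over all walks $\pth_w$ from $i$ to $j$ labeled $w$ on $\digr(A,B)$, a walk counting with ghost weight as soon as it traverses a ghost arc. This is a routine induction on $\len{w}$ from the definition of matrix multiplication over $\STrop$; equivalently it is Proposition~\ref{pr:WalkInterpret} applied to $\hdigr(A,B)$, a walk on $\hdigr(A,B)$ being a walk on $\digr(A,B)$ together with a choice of one of the two copies at each ghost arc it uses. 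From this and the definition of $\add$ in $\STrop$ I would read off, for $x:=w\eval{A}{B}_{i,j}$: the value $\nu(x)$ is the highest weight among walks from $i$ to $j$ labeled $w$ on $\hdigr(A,B)$ (equivalently, the highest $\nu$-weight among $w$-labeled walks on $\digr(A,B)$); moreover $x\in\Real$ exactly when there is a \emph{unique} walk of highest weight labeled $w$ on $\hdigr(A,B)$ and it uses no ghost arc, while $x\in\Real^\nu$ when that highest weight is attained by at least two walks of $\hdigr(A,B)$, and $x=\zero$ when there is no $w$-labeled walk from $i$ to $j$ at all.

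Finally I would choose the words. By Theorem~\ref{thm:IdExistence} the monoid $\MnT$ admits nontrivial identities; picking any two of them and forming, as in Theorem~\ref{thm:ST.IdExistence}, the composed identity $\sid{u}{v}:=\sid{u_0\substit{u'_0}{v'_0}}{v_0\substit{u'_0}{v'_0}}$ — a pair of nonempty words in $\{a,b\}^+$ — we get $u\eval{A}{B}=v\eval{A}{B}$ for all $A,B\in\Mn(\STrop)$, with $u\neq v$ by Corollary~\ref{cor:ST.IdExistence}. Comparing $(i,j)$-entries and using the previous paragraph for both $u$ and $v$: equality of $\nu$-values yields that the highest weight of walks from $i$ to $j$ labeled $u$ equals that of walks labeled $v$, and $u\eval{A}{B}_{i,j}\in\Real$ if and only if $v\eval{A}{B}_{i,j}\in\Real$, i.e.\ there is a unique walk of highest weight labeled $u$ precisely when there is a unique one labeled $v$. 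Since the identities of $\MnT$, hence $u$ and $v$, depend only on $n$, the same pair of words serves every digraph on $n$ nodes, which is the last assertion.

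I expect the only real obstacle to be the supertropical refinement of Proposition~\ref{pr:WalkInterpret} in the second step — tracking through repeated matrix multiplication exactly when the $(i,j)$-entry lands in $\Real$ rather than in $\Real^\nu$, so as to match ``a single highest-weight walk avoiding every double arc'' with ``a real entry''. As Remark~\ref{rem:paths} and the proof of Lemma~\ref{lem:nuEqMat.2} already suggest, this is cleanest if one passes to $\hdigr(A,B)$ and argues entirely with ordinary tropical walks there, after which the remaining steps are bookkeeping.
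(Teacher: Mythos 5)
Your proposal is correct and takes essentially the same route as the paper: there the corollary is presented precisely as the digraph reading of Theorem \ref{thm:ST.IdExistence} (and Corollary \ref{cor:ST.IdExistence}) via the correspondence between matrices over $\STrop$ and labeled-weighted digraphs with double arcs, i.e.\ your encoding of $\digr$ as a pair $A,B\in\Mn(\STrop)$ with ghost entries for double arcs, followed by reading off highest weights from $\nu$-values and uniqueness of the highest-weight walk from tangibility of the entry. Your second step only spells out the supertropical walk-interpretation bookkeeping that the paper leaves implicit in its discussion of $\hdigr$, so no further comparison is needed.
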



\end{document}